\documentclass[11pt, reqno]{amsart}

\usepackage[english]{babel}
\usepackage[left=3.5cm, right=3.5cm, top=3.1cm, bottom=3cm]{geometry}
\usepackage{url, amsfonts, amsmath, amsthm, amssymb, mathtools, mathrsfs, enumerate}
\usepackage{color, xcolor, verbatim, tensor, tikz, tikz-cd}
\usetikzlibrary{angles,quotes}
\usepackage[colorlinks=true,citecolor=blue]{hyperref}
\hypersetup{linkcolor=magenta, linktoc=page}
\usetikzlibrary{matrix,calc}
\definecolor{wine-stain}{rgb}{0.5,0,0}

\newcommand{\red}[1]{\textcolor{red}{#1}}

\newtheorem{thm}{Theorem}[section]
\newtheorem{prop}[thm]{Proposition}
\newtheorem{lem}[thm]{Lemma}
\newtheorem{cor}[thm]{Corollary}

\theoremstyle{definition}

\newtheorem{rem}[thm]{Remark}

\newtheorem*{ques*}{Question}
\newtheorem*{thm*}{Theorem}
\newtheorem*{rem*}{Remark}
\newtheorem*{rems*}{Remarks}
\newtheorem*{exs*}{Examples}
\newtheorem*{mthm*}{Main Theorem}

\numberwithin{equation}{section}

\newcommand{\ep}{\varepsilon}
\newcommand{\vp}{\varphi}

\renewcommand{\d}{\partial}

\newcommand{\RR}{\mathbb{R}}

\newcommand{\sL}{\mathcal{L}}

\newcommand{\fX}{\mathfrak{X}}

\newcommand{\dvol}{\mathrm{dvol}}

\newcommand{\Rc}{\mathrm{Ric}}

\makeatletter
\renewcommand*{\eqref}[1]{%
	\hyperref[{#1}]{\textup{\tagform@{\ref*{#1}}}}%
}
\makeatother

\title[Killing property of the defining vector field for almost Yamabe solitons]{On the Killing property of the defining vector field for an almost Yamabe soliton}
\author[Ramesh Mete]{Ramesh Mete}
\address{Department of Mathematics, Indian Institute of Technology Bombay, Powai, Mumbai - 400076, India.}
\email{ramesh2025m@gmail.com, rameshm@math.iitb.ac.in}
\date{\today}
\subjclass[2020]{53C25, 53E40}
\keywords{Almost Yamabe solitons, conformal and Killing vector fields, Yamabe flow}

\begin{document}
	
\maketitle

\begin{abstract}
In this paper, we first investigate almost Yamabe solitons on compact Riemannian manifolds without boundary of dimension greater than or equal to two. We provide some sufficient conditions for which the defining conformal vector field associated to a compact almost Yamabe soliton is a Killing vector field. We then study almost Yamabe solitons on complete, non-compact Riemannian manifolds. We prove the Killing property of the defining conformal vector field associated to a complete, non-compact almost Yamabe soliton under certain conditions when the dimension is strictly greater than two.
\end{abstract}
	

\section{Introduction}

Following \cite{Barbosa-Ribeiro Jr-2013}, a Riemannian manifold $(M, g)$ together with a smooth vector field $X \in \fX(M)$ is called an {\em almost Yamabe soliton} if there exists a smooth real-valued function $\rho: M \to \RR$ satisfying
\begin{equation}\label{eq:almost-Yam-soliton}
\frac{1}{2} \sL_{X} g = (R - \rho) g,
\end{equation}
where $\sL_{X}$ denotes the Lie derivative along the vector field $X$ and $R$ stands for the scalar curvature of the Riemannian metric $g$. We will denote an almost Yamabe soliton by the $4$-tuple $(M, g, X, \rho)$. An almost Yamabe soliton $(M, g, X, \rho)$ is called a {\em gradient almost Yamabe soliton} if there exists a real-valued smooth function $f$ on $M$ such that $X = \nabla f$, where $\nabla$ denotes the Levi-Civita connection of the Riemannian manifold $(M, g)$. In this case, the equation \eqref{eq:almost-Yam-soliton} reduced to the following condition
\begin{equation}\label{eq:grad-almost-Yamabe-soliton}
\text{Hess}(f) = (R - \rho) g,
\end{equation} 
where $\text{Hess}(f)$ denotes the Hessian of $f$. The above equation can be expressed in local coordinates as follows:
\begin{equation}\label{eq:grad-almost-Yamabe-soliton-local-expression}
\nabla_{i} \nabla_{j} f = (R - \rho) g_{i j}.
\end{equation}
If the function $\rho$ is {\em constant} on $M$, then an almost Yamabe soliton is said to be a {\em Yamabe soliton}, and if in addition $X = \nabla f$, then we say that the triple $(M, g, \nabla f)$ is a {\em gradient Yamabe soliton}.

\vspace*{1mm}
Recall that a smooth vector field $X \in \fX(M)$ on a Riemannian manifold $(M, g)$ is called {\em conformal} if it satisfies
\begin{equation}\label{eq:definition-of-conformal-vector-fields}
\frac{1}{2} \sL_{X} g = \psi g,
\end{equation}
for some real-valued smooth function $\psi$ on $M$. A conformal vector field $X$ is called {\em homothetic} if the conformal factor $\psi$ is a constant function, and is called a {\em Killing vector field} if $\psi$ is identically zero on $M$. A non-Killing vector field will be called a {\em non-trivial} conformal vector field. Observe that if $(M, g, X, \rho)$ is an almost Yamabe soliton, then $X$ is a conformal vector field with conformal factor $(R - \rho)$. We say that vector field $X \in \fX(M)$ is {\em closed} if it satisfies the following condition:
\begin{equation}\label{eq:definition-of-closed-vector-fields}
\nabla_{Y} X = \psi Y
\end{equation}
for all smooth vector fields $Y \in \fX(M)$ and some function $\psi \in C^\infty(M, \RR)$. A closed vector field is said to be {\em parallel} if the function $\psi$ is identically zero. Since for any smooth vector fields $X, Y, Z \in \fX(M)$ the following identity holds:
\begin{equation}\label{eq:Lie-derivative-formula-in-terms-of-covariant-derivative}
\sL_{X} g (Y, Z ) = g(\nabla_Y X, Z) + g(Y, \nabla_Z X),
\end{equation}
one can see that any closed vector field is conformal. In particular, any parallel vector field is Killing. On the other hand, any closed Killing vector field must be parallel.

\vspace*{1mm}
We now recall some well-known results about almost Yamabe solitons. It was proved in \cite[Proposition 2.1]{Barbosa-Ribeiro Jr-2013} that almost Yamabe solitons are conformal solutions of the non-normalized Yamabe flow, introduced by Hamilton \cite{Hamilton-1988}, which is defined by
\begin{equation}\label{eq:Yamabe-flow-non-normalized}
\frac{\d }{\d t} g(t) = - R_{g(t)} g(t),
\end{equation}
where $g(t)$ is a path of Riemannian metrics on $M$ and $R_{g(t)}$ denotes the scalar curvature of $g(t)$. The solution $g(t)$ is said to be {\em conformal} if $g(t) = \sigma(x, t) \vp_{t}^{\ast} g(0)$ for some positive function $\sigma : M \times [0, \ep) \to \RR$ and diffeomorphism $\vp_t : M \to M$ for all $t \in [0,\ep)$ with $\vp_0 = \text{Id}_{M}$ and $\sigma(x, 0) = 1$ for all $x\in M$, where $\ep > 0$. Using the Yamabe flow \eqref{eq:Yamabe-flow-non-normalized}, di Cerbo and Disconzi \cite{di Cerbo-Disconzi-2008} proved that a compact gradient Yamabe soliton has constant scalar curvature. See also \cite{Chow-Lu-Ni-book-chap} and \cite{Hsu-2012} for different proofs of this well-known fact. 
	
\vspace*{1mm}
The following result is due to Barbosa and Ribeiro Jr \cite{Barbosa-Ribeiro Jr-2013} which give some sufficient criterions for Killing-ness of the defining smooth vector field $X$.
	
\begin{thm}[{\cite[Theorem 1.3]{Barbosa-Ribeiro Jr-2013}}]
\label{thm:Barbosa-Ribeiro Jr}
Let $(M^n , g, X, \rho)$ be an almost Yamabe soliton. Then $X$ is a Killing vector field if one of the following conditions holds:
\begin{enumerate}[~~~1.]
\item $M$ is compact and 
\begin{align}\label{integral-condition-involving-Ric-BarRib2013}
\int_M \big(\Rc(X, X) - (n-2)\langle \nabla\rho, X \rangle\big) \dvol_{g} \leq 0.
\end{align}
			
\item $M$ is compact and 
\begin{align}\label{integral-condition-involving-grad of rho-BarRib2013}
\int_M \langle \nabla\rho, X \rangle \dvol_{g} \leq 0.
\end{align}
			
\item $M$ is complete and non-compact, and $|X| \in L^1(M)$ and either $R \geq \rho$ or $R \leq \rho$. 
\end{enumerate}
Here and throughout of the paper, $\langle \cdot, \cdot \rangle := g (\cdot, \cdot)$ and $\dvol_{g}$ denotes the volume element for the Riemannian metric $g$.
\end{thm}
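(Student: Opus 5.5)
The plan is to combine the conformal-field structure of $X$ with integral (Bochner-type) identities in the compact cases, and with a Liouville-type divergence lemma in the non-compact case. Set $\psi := R-\rho$, so that $\tfrac12\sL_X g=\psi g$ and $\operatorname{div}X = n\psi$. Since $\nabla X$ splits as $\psi\,\mathrm{Id}+A$ with $A$ skew-symmetric, the goal in every case is to force $\psi\equiv 0$.

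\textbf{Compact case, two integral identities.} First, for any vector field on a closed manifold one has
\[
\int_M \Rc(X,X)\,\dvol_g=\int_M\big((\operatorname{div}X)^2-\operatorname{tr}(\nabla X\circ\nabla X)\big)\dvol_g .
\]
This follows by integrating the Ricci identity $\operatorname{div}(\nabla_X X)-X(\operatorname{div}X)=\Rc(X,X)+\operatorname{tr}(\nabla X\circ\nabla X)$. Inserting $\nabla X=\psi\,\mathrm{Id}+A$ gives $\operatorname{tr}(\nabla X\circ\nabla X)=n\psi^2-|A|^2$, hence
\[
\int_M \Rc(X,X)\,\dvol_g = n(n-1)\int_M\psi^2\,\dvol_g+\int_M|A|^2\,\dvol_g .
\]
Second, I will use the standard variation formula for the scalar curvature along a conformal field,
\[
X(R)=\sL_X R=-2(n-1)\Delta\psi-2\psi R .
\]
Integrating it gives $\int_M X(R)=-2\int_M \psi R$. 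On the other hand, the divergence theorem gives $\int_M X(R)=-\int_M R\operatorname{div}X=-n\int_M R\psi$. Comparing the two, $(n-2)\int_M R\psi\,\dvol_g=0$. In the same way, $\int_M X(\psi)=-n\int_M\psi^2$. Since $\rho=R-\psi$, it follows that
\[
(n-2)\int_M\langle\nabla\rho,X\rangle\,\dvol_g = n(n-2)\int_M\psi^2\,\dvol_g .
\]

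\textbf{Compact case, conclusions.} For condition 1, subtracting the two displayed identities gives
\[
\int_M\big(\Rc(X,X)-(n-2)\langle\nabla\rho,X\rangle\big)\dvol_g=n\int_M\psi^2\,\dvol_g+\int_M|A|^2\,\dvol_g\ \geq 0 .
\]
The hypothesis then forces $\psi\equiv0$ (and in fact also $A\equiv0$), so $X$ is Killing; this argument works for all $n\geq 2$. For condition 2 with $n\geq3$, the identity $\int_M\langle\nabla\rho,X\rangle=n\int_M\psi^2$ together with the hypothesis gives $\psi\equiv 0$ directly.

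\textbf{Non-compact case.} For condition 3, $\operatorname{div}X=n(R-\rho)$ has a sign by hypothesis and $|X|\in L^1(M)$. I would invoke the Yau--Caminha type lemma: on a complete manifold, an integrable vector field whose divergence does not change sign has zero divergence. This gives $\psi\equiv0$.

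\textbf{Main obstacle.} The step I expect to be hardest is condition 2 in dimension $n=2$. There the identity $(n-2)\int_M R\psi=0$ carries no information, so $\int_M\langle\nabla\rho,X\rangle$ is not obviously equal to $n\int_M\psi^2$. I would first look for an extra relation special to surfaces: for instance, using $R=2K$ together with uniformization, or with the fact that a non-Killing conformal field on a closed surface forces the sphere with a round-conformal structure, in order to compute $\int_M R\psi$ directly. Failing that, I would treat $n=2$ separately under condition 2.
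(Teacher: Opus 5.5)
\textbf{Conditions 1, 3, and condition 2 for $n\ge 3$.} Your arguments here are correct and essentially the paper's. For condition 1 you reach the same identity as the paper, since $n\psi^2+|A|^2=|\nabla X|^2$:
\[
\int_M|\nabla X|^2\,\dvol_g=\int_M\big(\Rc(X,X)-(n-2)\langle\nabla\rho,X\rangle\big)\dvol_g .
\]
The paper integrates the Bochner-type formula and cites Bourguignon--Ezin for $\int_M\langle\nabla R,X\rangle\,\dvol_g=0$. You instead integrate the Ricci identity for $\nabla_XX$ and extract $(n-2)\int_M R\psi\,\dvol_g=0$ from the conformal variation of $R$. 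This is self-contained, and it still covers $n=2$ because the $\rho$-term then carries the factor $n-2=0$. Your condition-2 argument for $n\ge 3$ is exactly the paper's alternative proof. Condition 3 is the paper's appeal to Caminha's proposition.

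\textbf{The gap: condition 2 for $n=2$.} You leave this case open, but the statement has no dimension restriction. In dimension two, integrating the conformal variation formula for $R$ only reproduces $\int_M X(R)\,\dvol_g=-2\int_M R\psi\,\dvol_g$, which the divergence theorem already gives. So it is a tautology, and nothing ties $\int_M\langle\nabla\rho,X\rangle\,\dvol_g$ to $\int_M\psi^2\,\dvol_g$.

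The missing ingredient is the Bourguignon--Ezin identity $\int_M X(R)\,\dvol_g=0$. It holds for every conformal field on a closed manifold of any dimension $n\ge 2$ (their Theorem II.9; in dimension two it is a Kazdan--Warner type identity). The paper's main proof uses exactly this. It gives $\int_M R\psi\,\dvol_g=-\frac1n\int_M X(R)\,\dvol_g=0$, hence
\[
\int_M\psi^2\,\dvol_g=-\int_M\rho\psi\,\dvol_g=\frac1n\int_M\langle\nabla\rho,X\rangle\,\dvol_g
\]
in all dimensions, and condition 2 then forces $\psi\equiv 0$.

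If you prefer to prove the two-dimensional identity yourself, your uniformization idea points the right way, but it must actually be carried out. Write $g=e^{2u}g_0$ with $g_0$ of constant curvature, passing to the orientable double cover if needed. Then there are three cases:
\begin{enumerate}[(i)]
\item If $\chi(M)<0$, there are no nonzero conformal fields.
\item On a torus, $X$ is $g_0$-parallel, and one more integration by parts gives $\int_M R\psi\,\dvol_g=\int_M X(|\nabla_{g_0}u|^2)\,dA_{g_0}=0$.
\item On $S^2$, it is the classical Kazdan--Warner identity.
\end{enumerate}
As written, ``treat $n=2$ separately'' leaves that case unproved.
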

	
\begin{rem}\label{rem:errors-in-Barbosa-Ribeiro Jr}
Note that there are mistakes in the first item and second item of Theorem 1.3 in Barbosa-Ribeiro Jr \cite{Barbosa-Ribeiro Jr-2013}. More precisely, the integral condition in the first item must be the above inequality \eqref{integral-condition-involving-Ric-BarRib2013} instead of the following incorrect condition $$\int_M \Big(\frac{1}{2}\Rc(X, X) + (n-2)\langle \nabla\rho, X \rangle\Big) \dvol_{g} \leq 0$$ given in \cite[Theorem 1.3 (1.)]{Barbosa-Ribeiro Jr-2013}. On the other hand, the integral condition in the second item should be the inequality \eqref{integral-condition-involving-grad of rho-BarRib2013} instead of the following incorrect condition
$$\int_M \langle \nabla\rho, X \rangle \dvol_{g} \geq 0$$
mentioned in \cite[Theorem 1.3 (2.)]{Barbosa-Ribeiro Jr-2013}. The reader can refer to Section \ref{sec:some corrections in Barbosa-Ribeiro Jr article} for more details.
\end{rem}

Recently, for dimension $n \geq 3$ Hwang and Yun \cite{Hwang-Yun-2025} gave some sufficient conditions for which the defining vector field $X$ associated to an almost Yamabe soliton is Killing. When the manifold $M$ is compact, they proved the following result.

\begin{thm}[{\cite[Theorem 1.1]{Hwang-Yun-2025}}]
\label{thm:Hwang-Yun-2025-compact Riem mfds}
Let $(M, g, X, \rho)$ be a compact almost Yamabe soliton with $n \geq 3$.
		
\begin{enumerate}[~~~(i)]
\item If either $0 \leq \rho \leq R$, or $\langle X, \nabla\rho \rangle \leq 0$, then $X$ is a Killing vector field.
			
\vspace*{1mm}
\item If $\Rc(X, X) \leq 0$, then $X$ is a parallel Killing vector field.
			
\vspace*{1mm}
\item If $\int_M R^2 = 2 \int_M \rho^2$, then $X$ is a Killing vector field, and $(M, g, X)$ is a Yamabe soliton with $R = \rho = 0$.
\end{enumerate}
\end{thm}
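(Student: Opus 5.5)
The plan is to reduce all three items to two integral identities for the conformal factor $\psi := R - \rho$ of $X$, and then read off each item by a sign argument. Throughout, $M$ is closed, so divergences integrate to zero.

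\textbf{Step 1: two basic facts.} Taking the trace of \eqref{eq:almost-Yam-soliton} gives $\operatorname{div} X = n\psi$, and hence $\int_M \psi \, \dvol_g = 0$. Next, I will use the classical formula for the variation of scalar curvature along a conformal vector field:
\begin{equation*}
X(R) = -2(n-1)\Delta\psi - 2R\psi .
\end{equation*}
This follows from the conformal transformation law of $R$ applied to $\sL_X g = 2\psi g$; I take it as standard.

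\textbf{Step 2: a Bochner-type identity.} Write $\nabla X = \psi\,\mathrm{Id} + A$ with $A$ skew-symmetric; this decomposition is exactly the conformal condition. The integrated Bochner formula reads
\begin{equation*}
\int_M \Rc(X,X)\, \dvol_g = \int_M \big((\operatorname{div}X)^2 - \operatorname{tr}(\nabla X\circ\nabla X)\big)\dvol_g .
\end{equation*}
Since $\operatorname{tr}(A^2) = -|A|^2$, we have $\operatorname{tr}(\nabla X\circ\nabla X) = n\psi^2 - |A|^2$. Substituting, the identity becomes
\begin{equation*}
\int_M \Rc(X,X)\,\dvol_g = \int_M \big(n(n-1)\psi^2 + |A|^2\big)\dvol_g .
\end{equation*}
Item (ii) follows at once. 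If $\Rc(X,X)\le 0$, the right-hand side is a nonnegative integral bounded above by a nonpositive one, so $\psi \equiv 0$ and $A \equiv 0$. Then $\nabla X = 0$, so $X$ is parallel and in particular Killing.

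\textbf{Step 3: integrating $X(R)$ and $X(\rho)$.} This is where $n\ge 3$ enters, and I expect it to be the only delicate point. Integrating by parts with $\operatorname{div}X = n\psi$ gives $\int_M X(R) = -n\int_M R\psi$. On the other hand, integrating the formula from Step 1 gives $\int_M X(R) = -2\int_M R\psi$, because $\int_M \Delta\psi = 0$. Comparing the two, $(n-2)\int_M R\psi = 0$, so $\int_M R\psi = 0$ when $n \ge 3$. Since $\rho = R - \psi$, this yields
\begin{equation*}
\int_M \psi^2 = -\int_M \rho\psi, \qquad \int_M X(\rho) = -n\int_M\rho\psi = n\int_M\psi^2 .
\end{equation*}

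\textbf{Step 4: concluding (i) and (iii).} For (i), first suppose $\langle X,\nabla\rho\rangle \le 0$. Then $n\int_M \psi^2 = \int_M X(\rho) \le 0$, which forces $\psi \equiv 0$. Now suppose instead $0\le\rho\le R$. Then $\rho\psi \ge 0$ pointwise, so $\int_M\psi^2 = -\int_M\rho\psi \le 0$, and again $\psi\equiv 0$.

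For (iii), the relation $\int_M R\psi = 0$ means $\int_M R^2 = \int_M R\rho$. Expanding the square,
\begin{equation*}
\int_M\psi^2 = \int_M R^2 - 2\int_M R\rho + \int_M\rho^2 = \int_M\rho^2 - \int_M R^2 .
\end{equation*}
The hypothesis $\int_M R^2 = 2\int_M\rho^2$ turns this into $\int_M\psi^2 = -\int_M\rho^2 \le 0$. Hence $\psi \equiv 0$ and $\rho \equiv 0$, so $R = \rho = 0$. In particular $\rho$ is constant, and $(M,g,X)$ is a Yamabe soliton with Killing field $X$.
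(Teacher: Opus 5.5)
Your proof is correct and uses the same integral identities as the paper: $\mathrm{div}(X) = n\psi$, $\int_M R\psi\,\dvol_g = 0$, $\int_M \psi^2\,\dvol_g = -\int_M \rho\psi\,\dvol_g = \frac{1}{n}\int_M\langle\nabla\rho, X\rangle\,\dvol_g$, and the integrated Bochner formula. Your identity $\int_M \Rc(X,X) = \int_M (n(n-1)\psi^2 + |A|^2)$ is the one the paper derives in Section~\ref{sec:some corrections in Barbosa-Ribeiro Jr article} when correcting Barbosa--Ribeiro's Corollary~1.4, rewritten via $|\nabla X|^2 = n\psi^2+|A|^2$. You get $\int_M R\psi = 0$ from Yano's identity for $n\ge 3$, as in the paper's alternative proof there, rather than from Bourguignon--Ezin. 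Your relation $\int_M\psi^2 = \int_M\rho^2 - \int_M R^2$ is exactly the computation in the proof of Theorem~\ref{thm:main-compact case}. It settles (iii) directly and even shows $\int_M R^2 \le \int_M \rho^2$ for every compact almost Yamabe soliton, whereas the paper defers (iii) to Theorem~3.5 of Hwang--Yun.
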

	
We remark that the second item of above Theorem \ref{thm:Hwang-Yun-2025-compact Riem mfds} is basically Corollary 1.4 in \cite{Barbosa-Ribeiro Jr-2013}, which states that {\em a compact Riemannian manifold of negative Ricci curvature can not be a non-trivial almost Yamabe soliton}. The key argument for this fact is that, since the manifold $M$ is compact (and without boundary), one can easily obtain that $\nabla X = 0$ under the Ricci non-positive condition, and hence $X$ is a parallel Killing vector field (see Section \ref{sec:some corrections in Barbosa-Ribeiro Jr article} for more details).

\vspace*{1mm}
Our first main result in this paper is the following, which extends a result of \cite{Hwang-Yun-2025} in the compact case and also holds for dimension $n = 2$.
	
\begin{thm}
\label{thm:main-compact case}
Suppose $(M, g, X, \rho)$ is a compact almost Yamabe soliton of dimension $n \geq 2$ without boundary. If the scalar curvature $R$ of the Riemannian metric $g$ satisfies the following integral inequality
\begin{equation}\label{eq:inequality-assumption-compact-case}
\int_M R^2 \dvol_{g} \geq \int_M \rho^2 \dvol_{g},
\end{equation}
then $X$ is a Killing vector field. Moreover, when the following equality holds
\begin{equation}\label{eq:equality-assumtion-Hwang-Yun-compact-case}
\int_M R^2 \dvol_{g} = 2 \int_M \rho^2 \dvol_{g}
\end{equation}
the triple $(M, g, X)$ is a (compact) Yamabe soliton with $ R = \rho = 0$.
\end{thm}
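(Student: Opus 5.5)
The plan is to set $\psi := R - \rho$, the conformal factor of $X$, and show that $\int_M \psi^2 \,\dvol_g \le 0$, which forces $\psi \equiv 0$. Two integral identities on the closed manifold $M$ should give this.

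\emph{Step 1 (trace identity).} Taking the trace of \eqref{eq:almost-Yam-soliton} gives $\operatorname{div} X = n\psi$. Integrating with the divergence theorem yields $\int_M \psi \,\dvol_g = 0$, that is, $\int_M R = \int_M \rho$. This identity is not strictly needed below, but it is the standard starting point.

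\emph{Step 2 (the key identity $\int_M R\psi = 0$).} I would invoke the Kazdan--Warner type identity of Bourguignon--Ezin: for any conformal vector field $X$ on a compact manifold without boundary, $\int_M X(R)\,\dvol_g = 0$, valid in every dimension $n\ge 2$. Combined with
\[
\int_M X(R)\,\dvol_g = -\int_M R\,\operatorname{div}X\,\dvol_g = -n\int_M R\psi\,\dvol_g ,
\]
this gives $\int_M R(R-\rho)\,\dvol_g = 0$, i.e.\ $\int_M R^2 = \int_M R\rho$.

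For $n\ge 3$ there is also a self-contained route. The variation of scalar curvature under a conformal field,
\[
\sL_X R = -2\psi R - 2(n-1)\Delta\psi ,
\]
integrates to $-n\int_M R\psi = -2\int_M R\psi$. Hence $(n-2)\int_M R\psi = 0$.

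The case $n=2$ is the main obstacle, because this computation degenerates there. That is why I would rely on Bourguignon--Ezin, or argue through Gauss--Bonnet and uniformization, to cover surfaces.

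\emph{Step 3 (conclusion).} Expanding the square and using Step 2,
\[
\int_M \psi^2 = \int_M R^2 - 2\int_M R\rho + \int_M \rho^2 = \int_M \rho^2 - \int_M R^2 \le 0
\]
by the hypothesis \eqref{eq:inequality-assumption-compact-case}. Therefore $\psi\equiv 0$, and $X$ is Killing.

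\emph{Equality case.} The condition \eqref{eq:equality-assumtion-Hwang-Yun-compact-case} implies \eqref{eq:inequality-assumption-compact-case}, since $2\int\rho^2 \ge \int\rho^2$. So by the above $R=\rho$ on $M$. Substituting into \eqref{eq:equality-assumtion-Hwang-Yun-compact-case} gives $\int_M R^2 = 2\int_M R^2$, hence $R\equiv 0$, and then $\rho\equiv 0$. Since $\rho$ is constant, $(M,g,X)$ is a Yamabe soliton with $R=\rho=0$.
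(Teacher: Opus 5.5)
Your proposal is correct and is essentially the paper's proof: Bourguignon--Ezin gives $\int_M X(R)\,\dvol_g=0$, hence $\int_M R(R-\rho)\,\dvol_g=0$, and then $\int_M (R-\rho)^2\,\dvol_g=\int_M\rho^2\,\dvol_g-\int_M R^2\,\dvol_g\le 0$. The only difference is the equality case: the paper defers to Hwang--Yun's Theorem 3.5. You instead note that the equality hypothesis implies $\int_M R^2\,\dvol_g\ge\int_M\rho^2\,\dvol_g$, so $R=\rho$, and then $\int_M R^2\,\dvol_g=2\int_M R^2\,\dvol_g$ forces $R=\rho=0$; this is self-contained and valid for all $n\ge 2$.
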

	
Note that the last conclusion assuming the equality condition \eqref{eq:equality-assumtion-Hwang-Yun-compact-case} in Theorem \ref{thm:main-compact case} is due to Hwang and Yun \cite[Theorem 3.5]{Hwang-Yun-2025} for dimension $n \geq 3$ (see Theorem \ref{thm:Hwang-Yun-2025-compact Riem mfds} above), and they also proved the following upper bound
\begin{align*}
\int_M R^2 \dvol_{g} \leq 2 \int_M \rho^2 \dvol_{g}
\end{align*}
for any compact almost Yamabe soliton. One can easily see that this upper bound also holds for $n=2$. We conjecture that any compact almost Yamabe soliton $(M^n, g, X, \rho)$ satisfying the following condition
\begin{equation}\label{eq:conjecture-under-strict-inequality}
\int_M \rho^2 \dvol_{g} \leq \int_M R^2 \dvol_{g} \leq 2 \int_M \rho^2 \dvol_{g}
\end{equation}
must be a (compact) Yamabe soliton and hence the scalar curvature $R$ of the Riemannian metric $g$ is constant. Furthermore, it will be interesting to study compact almost Yamabe solitons under the following assumption
\begin{equation}\label{eq:inequality-to-study-further}
0 \leq \int_M R^2 \dvol_{g} < \int_M \rho^2 \dvol_{g}.
\end{equation}
	
\vspace*{1mm}
As a corollary of Theorem \ref{thm:main-compact case}, we have the following result.
	
\begin{cor}
\label{cor:corollary-1-compact-case}
Suppose $(M, g, X, \rho)$ is a compact almost Yamabe soliton of dimension $n \geq 2$. If the scalar curvature $R$ of the Riemannian metric $g$ satisfies either $0\leq \rho \leq R$, or $R \leq \rho \leq 0$, then $X$ is a Killing vector field.
\end{cor}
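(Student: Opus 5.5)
The plan is to reduce Corollary \ref{cor:corollary-1-compact-case} directly to Theorem \ref{thm:main-compact case}. To do this it suffices to check the integral inequality \eqref{eq:inequality-assumption-compact-case} under either sign hypothesis. The whole argument is a pointwise comparison of $R^2$ and $\rho^2$, followed by integration over $M$.

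Consider first the case $0 \leq \rho \leq R$ at every point of $M$. Both $R$ and $\rho$ are then nonnegative, and squaring is monotone on $[0,\infty)$, so $R^2 \geq \rho^2$ pointwise. Explicitly,
\begin{align*}
R^2 - \rho^2 = (R - \rho)(R + \rho) \geq 0,
\end{align*}
since $R - \rho \geq 0$ and $R + \rho \geq 0$.

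In the second case, $R \leq \rho \leq 0$, we have $R - \rho \leq 0$ and $R + \rho \leq 0$. The product $(R-\rho)(R+\rho)$ is therefore again nonnegative, so $R^2 \geq \rho^2$ pointwise. Equivalently, $|R| = -R \geq -\rho = |\rho|$.

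Now $M$ is compact and $R$, $\rho$ are smooth, so $R^2$ and $\rho^2$ are integrable. Integrating the pointwise inequality $R^2 \geq \rho^2$ against $\dvol_g$ gives exactly \eqref{eq:inequality-assumption-compact-case}. Theorem \ref{thm:main-compact case} then applies for every $n \geq 2$ and shows that $X$ is a Killing vector field.

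No step here is a real obstacle; all the difficulty sits inside Theorem \ref{thm:main-compact case}. The only point needing care is the sign bookkeeping that turns each hypothesis into $|R| \geq |\rho|$, and this is exactly why the two admissible configurations are $0 \le \rho \le R$ and $R \le \rho \le 0$ rather than mixed-sign ones.
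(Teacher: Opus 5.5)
Your proposal is correct and follows the paper's own proof: the paper likewise notes that either sign hypothesis gives $\int_M R^2 \,\dvol_g \geq \int_M \rho^2 \,\dvol_g$, and then applies Theorem \ref{thm:main-compact case}. Your factorization $R^2-\rho^2=(R-\rho)(R+\rho)\geq 0$ simply writes out the pointwise step that the paper leaves implicit.
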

	
We remark that Corollary \ref{cor:corollary-1-compact-case} under the first condition $0 \leq \rho \leq R$ is due to Hwang and Yun \cite[Theorem 3.1]{Hwang-Yun-2025} for dimension $n \geq 3$ (see also Theorem \ref{thm:Hwang-Yun-2025-compact Riem mfds} above). They also proved the following results about the Killing property of the defining vector field $X$ similar to the third item of Theorem \ref{thm:Barbosa-Ribeiro Jr} on complete non-compact Riemannian manifolds of dimension $n \geq 3$.
	
\begin{thm}[{\cite[Theorem 1.2]{Hwang-Yun-2025}}]
\label{thm:Hwang-Yun-2025-non-compact-case-Ric-non-positive}
Let $(M^n, g, X, \rho)$ be a complete non-compact almost Yamabe soliton with $n \geq 3$. If $\Rc(X, X) \leq 0$ and $|X| \in L^2(M)$, then $X$ is a parallel Killing vector field.
\end{thm}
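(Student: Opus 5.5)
The plan is an integral Bochner-type argument with cut-off functions. The $L^2$ hypothesis on $|X|$ will kill the boundary terms, and the condition $\Rc(X,X)\le 0$ will then force $\nabla X = 0$. Throughout, $\psi := R-\rho$ denotes the conformal factor of $X$, and $\nabla X$ is viewed as the $(1,1)$-tensor $Y\mapsto \nabla_Y X$.

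\emph{Step 1: decompose $\nabla X$.} Since $\frac12\sL_X g=\psi g$, identity \eqref{eq:Lie-derivative-formula-in-terms-of-covariant-derivative} shows that the symmetric part of $\nabla X$ is $\psi g$. So we write $\nabla_i X_j=\psi g_{ij}+A_{ij}$ with $A$ skew-symmetric. This gives
\[
|\nabla X|^2=n\psi^2+|A|^2,\qquad \nabla_iX_j\nabla_jX_i=n\psi^2-|A|^2,\qquad \operatorname{div}X=n\psi .
\]

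\emph{Step 2: the pointwise identity.} Commuting covariant derivatives gives the standard identity
\[
\nabla_i\big(X^j\nabla_jX^i\big)-\nabla_j\big(X^j\,\operatorname{div}X\big)=\nabla_iX^j\nabla_jX^i-(\operatorname{div}X)^2+\Rc(X,X).
\]
Substituting Step 1 yields
\[
|A|^2+n(n-1)\psi^2=\Rc(X,X)-\operatorname{div}(Z),\qquad Z:=\nabla_XX-(\operatorname{div}X)X .
\]
The terms in $\nabla\psi$ never appear, which is why I prefer this identity to the Bochner formula for $\Delta|X|^2$. Since $n\ge 2$ (in particular $n\ge3$), we have $n(n-1)\ge n$, and therefore $|\nabla X|^2\le |A|^2+n(n-1)\psi^2$.

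\emph{Step 3: cut-off and absorption.} Fix $p\in M$. Choose cut-offs $\phi=\phi_r$ with $\phi\equiv1$ on $B_r(p)$, $\operatorname{supp}\phi\subset B_{2r}(p)$ and $|\nabla\phi|\le 2/r$; completeness makes these compactly supported. Multiply the identity of Step 2 by $\phi^2$, integrate, use $\Rc(X,X)\le 0$, and integrate the divergence by parts. Using $|Z|\le C_n|X||\nabla X|$, this gives
\[
\int_M\phi^2|\nabla X|^2\le \int_M 2\phi|\nabla\phi|\,|Z|\le \frac{C}{r}\Big(\int_{B_{2r}\setminus B_r}|X|^2\Big)^{1/2}\Big(\int_M\phi^2|\nabla X|^2\Big)^{1/2}.
\]
Dividing by the last factor gives
\[
\int_{B_r}|\nabla X|^2\le \frac{C}{r^2}\int_M|X|^2 .
\]
Letting $r\to\infty$ and using $|X|\in L^2(M)$, we get $\nabla X\equiv 0$.

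\emph{Step 4: conclusion.} Since $\nabla X=0$, $X$ is parallel, so $\psi=0$ and $X$ is Killing; in particular $R=\rho$.

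The main obstacle is Step 2. One needs the right combination of divergence terms so that no derivative of $\psi=R-\rho$ appears, because we have no control on $\nabla\rho$. One also needs to check the signs so that $\Rc(X,X)\le0$ enters with the favourable sign. If I have a sign wrong there, the fallback is the Bochner formula $\tfrac12\Delta|X|^2=|\nabla X|^2-\Rc(X,X)-(n-2)\langle\nabla\psi,X\rangle$. In that case I would handle the $\nabla\psi$ term by integrating it by parts against $\phi^2$, which turns it into $\psi\operatorname{div}X=n\psi^2$ plus a cut-off term of the same type as above.
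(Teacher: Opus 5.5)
Your proof is correct as written; the fallback is not needed. The Step 2 identity is Yano's identity
\[
\mathrm{div}\big(\nabla_X X-(\mathrm{div}X)X\big)=\Rc(X,X)+\mathrm{tr}\big((\nabla X)^2\big)-(\mathrm{div}X)^2 ,
\]
and the signs come out so that $\Rc(X,X)\le 0$ enters with the favourable sign. The bound $|Z|\le(1+\sqrt n)|X||\nabla X|$ and the absorption step in Step 3 are also fine.

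The paper contains no proof of this statement. It only quotes it from Hwang--Yun and remarks that it also holds for $n=2$. The closest argument in the paper is the compact computation at the end of Section~4. There the integrated Bochner formula \eqref{eq:Bochner-type-formula-for-almost-Yamabe-solitons}, which carries the term $(n-2)\langle\nabla(R-\rho),X\rangle$, is combined with two global identities:
\begin{itemize}
\item Bourguignon--Ezin's \eqref{eq:Bourguignon-Ezin-identity-about-conformal-vector-field};
\item $\int_M(R-\rho)^2=\frac1n\int_M\langle\nabla\rho,X\rangle$.
\end{itemize}
Together these give $\int_M\big(|\nabla X|^2+n(n-2)(R-\rho)^2\big)=\int_M\Rc(X,X)$.

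Your Step 2 can be rewritten as
\[
|\nabla X|^2+n(n-2)\psi^2=\Rc(X,X)-\mathrm{div}\,Z .
\]
This is exactly a pointwise, divergence-form version of that integral identity. Because $Z$ involves only $X$ and $\nabla X$, it localizes with cut-offs, at the cost of a term $\frac{C}{r}\lVert X\rVert_{L^2}\lVert\phi\nabla X\rVert_{L^2}$ that you absorb.

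This route has three advantages over the compact argument:
\begin{itemize}
\item You need neither Bourguignon--Ezin, which has no analogue in the non-compact setting, nor any integrability of $\nabla R$ or $\nabla\rho$.
\item Only the conformality of $X$ is used, so $\rho$ plays no role.
\item The argument works verbatim for all $n\ge 2$, since $n(n-2)\ge 0$. This confirms the paper's remark that the result extends to $n=2$.
\end{itemize}

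One point to state explicitly: the distance-based cut-off is only Lipschitz. That is harmless for the integration by parts, or you can smooth it at the cost of changing the constant.
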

	
\begin{thm}[{\cite[Theorem 1.3]{Hwang-Yun-2025}}]
\label{thm:Hwang-Yun-non-compact-case-rho-and-R-minus-rho-is-non-negative}
Let $(M^n, g, X, \rho)$ be a complete non-compact almost Yamabe soliton with $n \geq 3$. If $0 \leq \rho \leq R$, $\langle X, \nabla R \rangle \geq 0$ and $R + |X| \in L^2(M)$, then $X$ is a Killing vector field. 
\end{thm}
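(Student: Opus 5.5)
The plan is to combine a divergence identity for the vector field $RX$ with a standard cut-off argument on the complete manifold. Write $\psi := R - \rho$ for the conformal factor of $X$; by hypothesis $\psi \geq 0$. Tracing \eqref{eq:almost-Yam-soliton} gives $\operatorname{div} X = n\psi$. Hence
\begin{equation*}
\operatorname{div}(RX) = \langle \nabla R, X\rangle + nR\psi \geq nR\psi,
\end{equation*}
where we used $\langle X, \nabla R\rangle \geq 0$. Since $\rho \geq 0$, we have $R = \psi + \rho \geq \psi \geq 0$. Therefore $R\psi \geq \psi^2 \geq 0$ pointwise, and
\begin{equation*}
\operatorname{div}(RX) \geq n\psi^2 .
\end{equation*}
The goal is to integrate this inequality over $M$ and show that the boundary contributions vanish. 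That forces $\int_M \psi^2 = 0$, i.e.\ $X$ is Killing.

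To make this rigorous on the non-compact manifold, fix a point $o \in M$. For $r > 0$ choose a Lipschitz cut-off $\varphi_r$ with $0 \leq \varphi_r \leq 1$, $\varphi_r \equiv 1$ on $B(o,r)$, $\operatorname{supp}\varphi_r \subset B(o,2r)$ and $|\nabla \varphi_r| \leq 2/r$. Completeness, via Hopf--Rinow, guarantees that the balls are relatively compact, so $\varphi_r$ has compact support. Multiplying the inequality by $\varphi_r^2$ and integrating by parts (the divergence theorem applies because the integrand is compactly supported) gives
\begin{equation*}
n\int_M \varphi_r^2 \psi^2 \,\dvol_g \leq \int_M \varphi_r^2 \operatorname{div}(RX)\,\dvol_g = -2\int_M \varphi_r R\langle \nabla\varphi_r, X\rangle \,\dvol_g .
\end{equation*}

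The right-hand side is controlled by
\begin{equation*}
\frac{4}{r}\int_M |R|\,|X|\,\dvol_g \leq \frac{2}{r}\int_M \big(R^2 + |X|^2\big)\dvol_g .
\end{equation*}
This quantity is finite because $R, |X| \in L^2(M)$; that is implied by $R + |X| \in L^2(M)$, since $R \geq 0$ and $|X| \geq 0$ give $R^2 + |X|^2 \leq (R + |X|)^2$. It therefore tends to $0$ as $r \to \infty$. On the left-hand side, $\varphi_r^2\psi^2$ increases to $\psi^2$ as $r \to \infty$. By monotone convergence we obtain $\int_M \psi^2\,\dvol_g \leq 0$, so $\psi \equiv 0$. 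Thus $\sL_X g = 0$ and $X$ is Killing.

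The main point requiring care is the choice of test quantity. Working with $\operatorname{div}(RX)$ rather than $\operatorname{div} X$ makes both sign hypotheses $\langle X,\nabla R\rangle \geq 0$ and $0 \leq \rho \leq R$ usable, and produces the factor $R\psi \geq \psi^2$. This choice also keeps the error term bilinear in $R$ and $|X|$, which is exactly what the $L^2$ assumption controls. The remaining steps (existence of the cut-off on a complete manifold and the passage to the limit) are routine. The argument does not appear to use $n \geq 3$.
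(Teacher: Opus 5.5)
Your proof is correct. The paper does not prove this statement itself. It quotes it from Hwang--Yun and only remarks that it also holds for $n=2$. The natural comparison is therefore with the paper's proof of Theorem \ref{thm:main-1-complete-non-cpt-case} and Corollary \ref{cor:complete-non-compact-case-under-rho-and-R-minus-rho-non-negative}.

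That proof uses the same mechanism as yours: a vector field with nonnegative divergence and integrable length, whose divergence must then vanish by Proposition \ref{prop:Caminha-2011-triviality-of-div}. The difference is the choice of field.

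\textbf{The paper's field.} It uses Yano's identity (Lemma \ref{lem:identity-for-conformal-vector-field-by-Yano}) to build $V=(\frac{R}{n-2}-\frac{\rho}{n})X+\frac{2(n-1)}{n-2}\nabla(R-\rho)$, which satisfies $\mathrm{div}(V)=(R-\rho)^2-\frac{1}{n}\langle\nabla\rho,X\rangle$. The costs are:
\begin{itemize}
\item it needs $n\geq 3$, because of the division by $n-2$;
\item it needs the extra hypothesis $|\nabla(R-\rho)|\in L^1(M)$.
\end{itemize}
In return, it needs no pointwise sign on $\rho$ or $R-\rho$. It also works with the sign of $\langle\nabla\rho,X\rangle$ rather than $\langle\nabla R,X\rangle$.

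\textbf{Your field $RX$.} It needs only the trace identity $\mathrm{div}(X)=n(R-\rho)$. From there:
\begin{itemize}
\item the hypotheses $0\leq\rho\leq R$ and $\langle X,\nabla R\rangle\geq 0$ give $\mathrm{div}(RX)\geq n(R-\rho)^2\geq 0$;
\item integrability is immediate from $|RX|\leq\frac{1}{2}(R^2+|X|^2)\in L^1(M)$.
\end{itemize}
Your cut-off computation is a self-contained proof of Caminha's proposition for this particular field; you could equally cite Proposition \ref{prop:Caminha-2011-triviality-of-div} directly.

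As you note, nothing in your argument uses $n\geq 3$, or even non-compactness. So your route gives a short justification of the paper's unproved claim that this result extends to $n=2$.
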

	
We would like to mention that both of these results also hold true for the case $n =2$. Motivated by Theorem \ref{thm:Hwang-Yun-2025-non-compact-case-Ric-non-positive}, it would be interesting to study whether the vector field $X$ associated to a complete non-compact almost Yamabe soliton $(M^n, g, X, \rho)$ is parallel (and hence Killing) or not under the following weaker assumptions $|X|\in L^1(M)$ but $|X|\notin L^2(M)$, and $\Rc(X, X) \leq 0$.

\vspace*{1mm}
We now state our second main result, which is in the non-compact setting.
	
\begin{thm}
\label{thm:main-1-complete-non-cpt-case}
Let $(M^n, g, X, \rho)$ be a complete, non-compact almost Yamabe soliton of dimension $n \geq 3$. Assume the following conditions hold: 
\begin{equation}\label{eq:inner-prod-of-grad-rho-and-X-non-positive}
\langle \nabla\rho, X \rangle \leq 0, ~~ \rho \in L^2(M), ~~ |X| \in L^2(M),
\end{equation}
and
\begin{equation}\label{eq:grad-R-minus-rho-in-L1-space}
R \in L^2(M), ~~ |\nabla(R - \rho)| \in L^1(M).
\end{equation}
Then $X$ is a Killing vector field.
\end{thm}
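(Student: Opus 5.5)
The plan is to prove a pointwise identity that writes $n\psi^2$, with $\psi := R - \rho$, as a sum of divergences plus the non-positive term $\langle \nabla\rho, X\rangle$. Integrating against cut-off functions will then kill the divergences, thanks to the integrability assumptions \eqref{eq:inner-prod-of-grad-rho-and-X-non-positive} and \eqref{eq:grad-R-minus-rho-in-L1-space}. This forces $\psi \equiv 0$, i.e.\ $X$ is Killing.

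\textbf{Step 1: the pointwise identities.} By \eqref{eq:almost-Yam-soliton}, $X$ is conformal with factor $\psi$, so $\operatorname{div} X = n\psi$. I will use the classical variation formula for the scalar curvature along a conformal vector field:
\begin{equation*}
\langle X, \nabla R\rangle = \sL_X R = -2(n-1)\Delta\psi - 2R\psi .
\end{equation*}
It follows by differentiating $R$ in the direction $\sL_X g = 2\psi g$, using $\delta R(h) = -\Delta \operatorname{tr} h + \operatorname{div}\operatorname{div} h - \langle \Rc, h\rangle$ with $h = 2\psi g$. Combining this with $\operatorname{div}(RX) = \langle \nabla R, X\rangle + nR\psi$ gives
\begin{equation*}
(n-2)R\psi = \operatorname{div}(RX) + 2(n-1)\Delta\psi .
\end{equation*}
This is exactly where $n \ge 3$ is needed. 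Also, $\operatorname{div}(\rho X) = \langle \nabla\rho, X\rangle + n\rho\psi$. Substituting both into $n\psi^2 = nR\psi - n\rho\psi$ yields
\begin{equation*}
n\psi^2 = \frac{n}{n-2}\Big(\operatorname{div}(RX) + 2(n-1)\operatorname{div}(\nabla\psi)\Big) - \operatorname{div}(\rho X) + \langle \nabla\rho, X\rangle .
\end{equation*}

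\textbf{Step 2: integration with cut-offs.} Fix $p \in M$ and take standard cut-offs $\varphi_r$ with $\varphi_r = 1$ on $B_p(r)$, $\operatorname{supp}\varphi_r \subset B_p(2r)$, $0 \le \varphi_r \le 1$ and $|\nabla\varphi_r| \le C/r$; completeness makes these available. Multiply the identity by $\varphi_r$ and integrate. For any vector field $V$ with $|V| \in L^1(M)$,
\begin{equation*}
\Big|\int_M \varphi_r \operatorname{div} V\Big| = \Big|\int_M \langle \nabla\varphi_r, V\rangle\Big| \le \frac{C}{r}\int_M |V| \longrightarrow 0 .
\end{equation*}
This applies to $V = RX$ and $V = \rho X$, since $|RX|, |\rho X| \in L^1(M)$ by Cauchy--Schwarz from $R, \rho, |X| \in L^2(M)$. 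It also applies to $V = \nabla\psi$, since $|\nabla(R-\rho)| \in L^1(M)$. The remaining term satisfies $\int_M \varphi_r \langle \nabla\rho, X\rangle \le 0$ by hypothesis. Hence $\limsup_{r\to\infty} n\int_M \varphi_r \psi^2 \le 0$. By monotone convergence $\int_M \psi^2 = 0$, so $R = \rho$, and $X$ is Killing. As a by-product, $\langle\nabla\rho, X\rangle = 0$ a.e.

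\textbf{Main obstacle.} The delicate part is Step 1: getting the correct constants and signs in the formula for $\sL_X R$, and choosing the combination of $\operatorname{div}(RX)$, $\operatorname{div}(\rho X)$ and $\Delta\psi$ that eliminates every term of indefinite sign, namely $R\psi$ and $\langle \nabla R, X\rangle$. The analytic part in Step 2 is routine once each remaining term is either a divergence of an $L^1$ field or of definite sign.
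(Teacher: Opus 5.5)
Your proof is correct and follows the paper's route. You use the same divergence identity, $n(R-\rho)^2 = n\,\mathrm{div}(V) + \langle \nabla\rho, X\rangle$ with $V = \left(\frac{R}{n-2}-\frac{\rho}{n}\right)X + \frac{2(n-1)}{n-2}\nabla(R-\rho)$; the paper gets it from Yano's formula, which is equivalent to your $\sL_X R$ computation. The only difference is that the paper applies Caminha's proposition as a black box to $\mathrm{div}(V)\ge 0$, whereas you write out its cut-off proof and handle the signed term $\langle\nabla\rho,X\rangle$ separately, which incidentally avoids the orientability hypothesis in Caminha's statement.
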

	
As a corollary of Theorem \ref{thm:main-1-complete-non-cpt-case}, we have the following two results.
	
\begin{cor}
\label{cor:complete-non-compact-case-under-finite-vol-and-Sobolev-condition}
Let $(M^n, g, X, \rho)$ be a complete, non-compact almost Yamabe soliton of dimension $n \geq 3$ with finite volume. Then, $X$ is a Killing vector field under the conditions \eqref{eq:inner-prod-of-grad-rho-and-X-non-positive} and 
\begin{equation}\label{eq:div-of-X-is-in-Sobolev-space}
\mathrm{div}(X) \in W^{1,2}(M).
\end{equation}
Here, $W^{1,2}(M)$ denotes the Sobolev (or Hilbert) space, which is the completion of the vector space of smooth real-valued functions $f$ such that $f \in L^2(M)$ and $|\nabla f|_{g} \in L^2(M)$ with respect to the norm $$\lVert f \rVert_{W^{1,2}(M)} := \left(\lVert f \rVert_{L^2(M)}^{2} + \lVert \nabla f \rVert_{L^2(M)}^{2}\right)^{1/2}.$$
\end{cor}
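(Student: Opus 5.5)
Corollary \ref{cor:complete-non-compact-case-under-finite-vol-and-Sobolev-condition} will follow from Theorem \ref{thm:main-1-complete-non-cpt-case} once we check its remaining hypotheses: $R \in L^2(M)$ and $|\nabla(R-\rho)| \in L^1(M)$. The hypotheses \eqref{eq:inner-prod-of-grad-rho-and-X-non-positive} are assumed directly.

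The starting point is the trace of the soliton equation \eqref{eq:almost-Yam-soliton}. Tracing with respect to $g$ gives
\begin{equation*}
\mathrm{div}(X) = n(R-\rho), \qquad\text{that is,}\qquad R - \rho = \tfrac{1}{n}\,\mathrm{div}(X).
\end{equation*}
Hence \eqref{eq:div-of-X-is-in-Sobolev-space} says exactly that $R-\rho \in L^2(M)$ and $|\nabla(R-\rho)| \in L^2(M)$.

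First, since $\rho \in L^2(M)$ by \eqref{eq:inner-prod-of-grad-rho-and-X-non-positive}, the triangle inequality in $L^2$ gives
\begin{equation*}
\lVert R\rVert_{L^2} \le \lVert R-\rho\rVert_{L^2} + \lVert \rho\rVert_{L^2} < \infty,
\end{equation*}
so $R\in L^2(M)$.

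Second, finite volume lets us pass from $L^2$ to $L^1$. By Cauchy--Schwarz,
\begin{equation*}
\int_M |\nabla(R-\rho)|\,\dvol_g \le \mathrm{Vol}(M)^{1/2}\,\lVert \nabla(R-\rho)\rVert_{L^2} = \tfrac{1}{n}\,\mathrm{Vol}(M)^{1/2}\,\lVert \nabla \mathrm{div}(X)\rVert_{L^2} < \infty,
\end{equation*}
so $|\nabla(R-\rho)|\in L^1(M)$.

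All hypotheses of Theorem \ref{thm:main-1-complete-non-cpt-case} now hold, and $X$ is Killing.

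The only delicate point is interpreting $\mathrm{div}(X)\in W^{1,2}(M)$ for a smooth function. As a completion, $W^{1,2}(M)$ could a priori contain elements that are not classical. Here, however, $\mathrm{div}(X)$ is smooth, so its weak gradient coincides with its classical gradient, and membership in $W^{1,2}(M)$ means precisely that $\mathrm{div}(X)$ and $|\nabla \mathrm{div}(X)|$ are both in $L^2(M)$. No other obstacle is expected: the finite-volume hypothesis is used only in the Cauchy--Schwarz step above.
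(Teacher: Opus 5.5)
Your proposal is correct and follows essentially the same route as the paper. You trace the soliton equation to get $\mathrm{div}(X) = n(R-\rho)$, read off $R-\rho\in L^2(M)$ and $|\nabla(R-\rho)|\in L^2(M)$, use finite volume via Cauchy--Schwarz to upgrade the gradient bound to $L^1$, and use $\rho\in L^2(M)$ to get $R\in L^2(M)$ before invoking Theorem \ref{thm:main-1-complete-non-cpt-case}; your extra remark on what $W^{1,2}$-membership means for the smooth function $\mathrm{div}(X)$ simply makes explicit a detail the paper leaves implicit.
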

	
\begin{cor}\label{cor:complete-non-compact-case-under-rho-and-R-minus-rho-non-negative}
Let $(M^n, g, X, \rho)$ be a complete, non-compact almost Yamabe soliton of dimension $n \geq 3$. If $0 \leq \rho \leq R$, $\langle \nabla\rho, X \rangle \leq 0$, $|\nabla(R - \rho)| \in L^1(M)$ and $R + |X| \in L^2(M)$, then $X$ is a Killing vector field.
\end{cor}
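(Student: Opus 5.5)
The plan is to deduce this corollary directly from Theorem \ref{thm:main-1-complete-non-cpt-case}, by checking that its hypotheses \eqref{eq:inner-prod-of-grad-rho-and-X-non-positive} and \eqref{eq:grad-R-minus-rho-in-L1-space} follow from the assumptions here. Two of the required conditions are assumed verbatim: $\langle \nabla\rho, X \rangle \leq 0$ and $|\nabla(R-\rho)| \in L^1(M)$. What remains is to obtain the three integrability conditions $R \in L^2(M)$, $|X| \in L^2(M)$ and $\rho \in L^2(M)$ from the single assumption $R + |X| \in L^2(M)$ together with $0 \leq \rho \leq R$.

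First, the assumption $0 \leq \rho \leq R$ gives $R \geq 0$ on $M$. Since $|X| \geq 0$ as well, the two summands of $R + |X|$ are nonnegative, so pointwise
\begin{equation*}
0 \leq R \leq R + |X| \quad \text{and} \quad 0 \leq |X| \leq R + |X|.
\end{equation*}
Squaring and integrating gives $R \in L^2(M)$ and $|X| \in L^2(M)$. Next, $0 \leq \rho \leq R$ gives $\rho^2 \leq R^2$ pointwise, hence $\rho \in L^2(M)$.

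All hypotheses of Theorem \ref{thm:main-1-complete-non-cpt-case} now hold, and $n \geq 3$ with $M$ complete and non-compact as required, so $X$ is a Killing vector field. I do not expect a real obstacle. The only point needing care is the sign observation $R \geq 0$: without it, the hypothesis $R + |X| \in L^2(M)$ would not control $R$ and $|X|$ separately, since cancellation between the two terms could occur.
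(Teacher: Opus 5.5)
Your proposal is correct and follows the paper's proof: the paper likewise notes that $R + |X| \in L^2(M)$ together with $0 \leq \rho \leq R$ gives $|X|, R, \rho \in L^2(M)$, and then applies Theorem \ref{thm:main-1-complete-non-cpt-case}. Your remark that $R \geq 0$ is what prevents cancellation in $R + |X|$ just makes explicit a step the paper leaves implicit.
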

	
We remark that one can relate Corollary \ref{cor:complete-non-compact-case-under-rho-and-R-minus-rho-non-negative} with the above Theorem \ref{thm:Hwang-Yun-non-compact-case-rho-and-R-minus-rho-is-non-negative}.

\vspace*{1mm}
\textbf{Brief outline of the paper:} In Section \ref{sec:preliminaries}, we recall some basic properties of conformal vector fields and almost Yamabe solitons. Next, in Section \ref{sec:proof of main result} we prove our main results Theorem \ref{thm:main-compact case} and Corollary \ref{cor:corollary-1-compact-case} in the compact setting. In Section \ref{sec:some corrections in Barbosa-Ribeiro Jr article}, we describe some errors occurred in the article of Barbosa and Ribeiro Jr \cite{Barbosa-Ribeiro Jr-2013} and we correct them accordingly. Lastly, in Section \ref{sec:proof-of-main-results-in-non-compact-case} we prove our results Theorem \ref{thm:main-1-complete-non-cpt-case}, Corollary \ref{cor:complete-non-compact-case-under-finite-vol-and-Sobolev-condition} and Corollary \ref{cor:complete-non-compact-case-under-rho-and-R-minus-rho-non-negative} in the non-compact case.

\vspace*{1mm}
\section{Preliminaries}
\label{sec:preliminaries}
	
In this section, we recall some standard results about almost Yamabe solitons. Before that we have the following formula about the divergence of the Lie derivative $\sL_{X} g$ on a Riemannian manifold $(M, g)$ together with a smooth vector field $X \in \fX(M)$.
	
\begin{lem}[{\cite[Lemma 2.1, Corollary 2.2 and Corollary 2.3]{Petersen-Wylie-2009}}]
\label{lem:divergence-of-Lie-derivative-Petersen-Wylie}
Suppose $X$ is a smooth vector field on a Riemannian manifold $(M, g)$. Then
\begin{equation*}
\mathrm{div}(\sL_{X} g) (X) = \frac{1}{2} \Delta |X|^2 - |\nabla X|^2 + \Rc(X, X) + \nabla_{X}\mathrm{div}(X).
\end{equation*}
In particular, if $X$ is a Killing vector field, then 
\begin{equation*}
\frac{1}{2} \Delta |X|^2 = |\nabla X|^2 - \Rc(X, X).
\end{equation*}
When $X = \nabla f$ is a gradient vector field we have
\begin{equation*}
\mathrm{div}(\sL_{X} g) (Z) = 2 \Rc(Z, X) + 2 \nabla_{Z}\mathrm{div}(X)
\end{equation*}
for any $Z \in \fX(M)$. In particular, if $X = \nabla f$ is a gradient vector field, then we get the following Bochner formula
\begin{equation*}
\frac{1}{2} \Delta |\nabla f|^2 = |\nabla^2 f|^2 + \Rc(\nabla f, \nabla f) + \langle \nabla\Delta f, \nabla f \rangle.
\end{equation*}
\end{lem}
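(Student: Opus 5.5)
The plan is a direct computation in local coordinates with respect to a normal frame, using only the Ricci commutation identity. Throughout, I write $X_j$ for the components of the $1$-form dual to $X$, $(\sL_X g)_{ij} = \nabla_i X_j + \nabla_j X_i$ (which is \eqref{eq:Lie-derivative-formula-in-terms-of-covariant-derivative}), $|\nabla X|^2 = \nabla_i X_j \nabla^i X^j$, and $\mathrm{div}(\sL_X g)_j = \nabla^i (\sL_X g)_{ij}$.

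\textbf{Step 1: divergence of $\sL_X g$.} First I compute
\begin{equation*}
\nabla^i(\nabla_i X_j + \nabla_j X_i) = \Delta X_j + \nabla^i \nabla_j X_i .
\end{equation*}
In the second term I commute the derivatives. The Ricci identity gives $\nabla_i\nabla_j X^i - \nabla_j \nabla_i X^i = R_{jk} X^k$. Hence
\begin{equation*}
\nabla^i\nabla_j X_i = \nabla_j \mathrm{div}(X) + \Rc(e_j, X).
\end{equation*}
This commutation is the only delicate point: the sign convention for the curvature has to be fixed so that the contraction yields $+\Rc$. I would check it on the round sphere, or simply fix $R_{jk} = R^i{}_{ijk}$ consistently with the commutator $[\nabla_i,\nabla_j]$.

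\textbf{Step 2: contract with $X^j$.} Next I use $\frac12 \Delta |X|^2 = X^j \Delta X_j + |\nabla X|^2$, that is, $X^j\Delta X_j = \frac12\Delta|X|^2 - |\nabla X|^2$. Contracting Step 1 with $X^j$ then gives
\begin{equation*}
\mathrm{div}(\sL_X g)(X) = \frac12\Delta|X|^2 - |\nabla X|^2 + \Rc(X,X) + \nabla_X \mathrm{div}(X),
\end{equation*}
which is the first formula.

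\textbf{Step 3: the Killing case.} If $X$ is Killing, then $\sL_X g = 0$, so the left-hand side vanishes. Tracing $\sL_X g = 0$ gives $2\,\mathrm{div}(X) = 0$, so the last term vanishes as well. Rearranging yields $\frac12\Delta|X|^2 = |\nabla X|^2 - \Rc(X,X)$.

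\textbf{Step 4: the gradient case and the Bochner formula.} For $X = \nabla f$, $\nabla_i X_j = \nabla_i\nabla_j f$ is symmetric. Therefore $\sL_X g = 2\nabla^2 f$ and $\mathrm{div}(X) = \Delta f$. In Step 1 both terms become $\nabla^i\nabla_i\nabla_j f$ and $\nabla^i\nabla_j\nabla_i f$, and these coincide after one commutation. This gives $\mathrm{div}(\sL_X g)(Z) = 2\Rc(Z,X) + 2\nabla_Z \Delta f$.

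Taking $Z = X$ and equating with the general formula of Step 2 gives
\begin{equation*}
2\Rc(X,X) + 2\langle\nabla\Delta f, X\rangle = \tfrac12\Delta|X|^2 - |\nabla^2 f|^2 + \Rc(X,X) + \langle \nabla\Delta f, X\rangle .
\end{equation*}
Rearranging yields exactly the stated Bochner formula.

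I expect no real obstacle beyond keeping the curvature sign convention consistent in the commutation of Step 1.
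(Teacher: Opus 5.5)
Your proposal is correct: the Ricci identity $\nabla_i\nabla_j X^i-\nabla_j\nabla_i X^i=R_{jk}X^k$ has the right sign, and your sphere check confirms it. Likewise the contraction with $X$, the Killing case (using $\mathrm{div}(X)=\frac12\mathrm{tr}\,\sL_X g=0$) and the gradient case (where the two terms agree simply by symmetry of the Hessian) all go through. The paper gives no proof of its own and just cites Petersen--Wylie, whose argument is essentially this same direct computation, so there is nothing to add.
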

	
Applying the Lemma \ref{lem:divergence-of-Lie-derivative-Petersen-Wylie}, we have the following result due to Barbosa-Ribeiro Jr \cite{Barbosa-Ribeiro Jr-2013} for any almost Yamabe soliton.
	
\begin{lem}[{\cite[Lemma 2.2]{Barbosa-Ribeiro Jr-2013}}]
Let $(M^n, g, X, \rho)$ be an almost Yamabe soliton (i.e. equation \eqref{eq:almost-Yam-soliton} holds). Then 
\begin{equation}\label{eq:Bochner-type-formula-for-almost-Yamabe-solitons}
\frac{1}{2} \Delta |X|^2 + (n -2) \langle \nabla (R - \rho), X \rangle = |\nabla X|^2 - \Rc(X, X).
\end{equation}
\end{lem}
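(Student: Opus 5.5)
We will prove the identity by specializing the general formula of Lemma \ref{lem:divergence-of-Lie-derivative-Petersen-Wylie} to the case $\frac{1}{2}\sL_X g = \psi g$ with $\psi := R - \rho$. This gives two expressions for $\mathrm{div}(\sL_X g)(X)$. The first comes from the soliton equation itself, the second from Lemma \ref{lem:divergence-of-Lie-derivative-Petersen-Wylie}; equating them yields \eqref{eq:Bochner-type-formula-for-almost-Yamabe-solitons}.

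\textbf{Step 1: the soliton side.} Since $\sL_X g = 2\psi g$ and $g$ is parallel, we get $\mathrm{div}(\sL_X g) = 2\,d\psi$, i.e.
\begin{equation*}
\mathrm{div}(\sL_X g)(X) = 2\langle \nabla \psi, X\rangle .
\end{equation*}
In local coordinates this is just $\nabla^i(2\psi g_{ij}) = 2\nabla_j \psi$. Tracing the soliton equation \eqref{eq:almost-Yam-soliton} also gives
\begin{equation*}
\mathrm{div}(X) = \tfrac12 \mathrm{tr}(\sL_X g) = n\psi, \qquad \nabla_X \mathrm{div}(X) = n\langle \nabla\psi, X\rangle .
\end{equation*}

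\textbf{Step 2: insert into Lemma \ref{lem:divergence-of-Lie-derivative-Petersen-Wylie}.} The general formula reads
\begin{equation*}
2\langle\nabla\psi, X\rangle = \tfrac12\Delta|X|^2 - |\nabla X|^2 + \Rc(X,X) + n\langle\nabla\psi, X\rangle .
\end{equation*}
Moving terms across gives
\begin{equation*}
\tfrac12\Delta|X|^2 + (n-2)\langle\nabla\psi,X\rangle = |\nabla X|^2 - \Rc(X,X).
\end{equation*}
Substituting $\psi = R-\rho$ yields exactly \eqref{eq:Bochner-type-formula-for-almost-Yamabe-solitons}.

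The only point requiring care is the sign and normalization convention for $\mathrm{div}$ of a symmetric 2-tensor and for $|\nabla X|^2$ in Lemma \ref{lem:divergence-of-Lie-derivative-Petersen-Wylie}. We will use the convention $(\mathrm{div}\,h)(Z) = \sum_i (\nabla_{e_i}h)(e_i,Z)$, under which $\mathrm{div}(2\psi g) = 2\,d\psi$. As a check, with this convention the Killing case ($\psi=0$) of the computation reproduces the second formula of that lemma.

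If we prefer to avoid the cited lemma, we can instead verify the formula directly via a local computation. Write $\nabla_i X_j + \nabla_j X_i = 2\psi g_{ij}$, differentiate, and commute covariant derivatives using the Ricci identity $\nabla_i\nabla_j X^i = \nabla_j \nabla_i X^i + R_{jk}X^k$. Then contract with $X^j$ and use $\tfrac12\Delta|X|^2 = |\nabla X|^2 + X^j\Delta X_j$. This route is routine, so we do not expect any step to be a genuine obstacle.
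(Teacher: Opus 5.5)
Your proof is correct and follows the route the paper indicates: specialize the Petersen--Wylie formula using $\mathrm{div}(\sL_X g) = 2\,d(R-\rho)$ and $\mathrm{div}(X) = n(R-\rho)$, which gives the corrected coefficient $-\Rc(X,X)$. One small remark: your Killing-case ``check'' does not actually test the divergence convention, since $\sL_X g = 0$ there and the left side vanishes under any convention; the gradient-case formula in that lemma, or your direct coordinate computation with the Ricci identity, is the meaningful consistency check.
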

	
We would like to mention that there is an error in \cite[Lemma 2.2]{Barbosa-Ribeiro Jr-2013} namely ``$- \frac{1}{2} \Rc(X, X)$" appears instead of the correct term ``$- \Rc(X, X)$" on the right hand side of the above formula \eqref{eq:Bochner-type-formula-for-almost-Yamabe-solitons}. More precisely, the error occurs in the equation $(2.4)$ of their article \cite{Barbosa-Ribeiro Jr-2013}. One can also see the fifth item of Theorem $2.1$ in \cite{Hwang-Yun-2025}.
	
\vspace*{1mm}
Next, we recall the following basic property for any conformal vector field.
	
\begin{lem}[{\cite{Yano-1970-bk}}]
\label{lem:identity-for-conformal-vector-field-by-Yano}
For any conformal vector field $X$ satisfying \eqref{eq:definition-of-conformal-vector-fields} on a Riemannian manifold $(M,g)$ of dimension $n$, the following holds
\begin{align*}
(n-1)\Delta\psi + \psi R + \frac{1}{2}\langle \nabla R, X \rangle = 0.
\end{align*}
In particular, if $X$ is homothetic, then $\langle \nabla R, X \rangle = - 2 \psi R$.
\end{lem}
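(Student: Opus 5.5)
The plan is to obtain the identity by comparing two ways of computing the first-order variation of the scalar curvature along the flow of $X$. One way uses diffeomorphism invariance. The other uses the general first variation formula for scalar curvature, specialized to a conformal variation. Throughout, $\Delta = \mathrm{tr}_g \nabla^2$, which is the sign convention used in Lemma \ref{lem:divergence-of-Lie-derivative-Petersen-Wylie}.

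\emph{Step 1: diffeomorphism invariance.} Let $\vp_t$ be the local flow of $X$ and set $g_t := \vp_t^{\ast} g$. Naturality of scalar curvature gives $R_{g_t} = \vp_t^{\ast} R_g = R \circ \vp_t$. Differentiating at $t=0$ yields
\[
\frac{d}{dt}\Big|_{t=0} R_{g_t} = X(R) = \langle \nabla R, X \rangle .
\]
On the other hand, $\frac{d}{dt}\big|_{t=0} g_t = \sL_X g = 2\psi g$ by \eqref{eq:definition-of-conformal-vector-fields}. The whole argument is local, so completeness of $X$ is not needed.

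\emph{Step 2: the first variation of $R$.} For a variation $\frac{d}{dt} g_t = h$, the standard formula is
\[
\frac{d}{dt} R = -\Delta(\mathrm{tr}_g h) + \mathrm{div}\,\mathrm{div}\, h - \langle \Rc, h \rangle .
\]
I will take $h = 2\psi g$ and compute each term:
\begin{itemize}
\item $\mathrm{tr}_g h = 2n\psi$;
\item $\mathrm{div}\,\mathrm{div}(2\psi g) = 2\Delta\psi$, because $g$ is parallel;
\item $\langle \Rc, 2\psi g\rangle = 2\psi R$.
\end{itemize}
Substituting these gives
\[
\frac{d}{dt}\Big|_{t=0} R_{g_t} = -2n\Delta\psi + 2\Delta\psi - 2\psi R = -2(n-1)\Delta\psi - 2\psi R .
\]

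\emph{Step 3: conclusion.} Equating the expressions from Steps 1 and 2 gives
\[
\langle \nabla R, X\rangle = -2(n-1)\Delta\psi - 2\psi R .
\]
Dividing by $-2$ yields $(n-1)\Delta\psi + \psi R + \tfrac12\langle\nabla R, X\rangle = 0$. If $X$ is homothetic, then $\psi$ is constant, so $\Delta\psi = 0$ and hence $\langle \nabla R, X\rangle = -2\psi R$.

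The only delicate point is keeping signs and normalizations consistent in the variation formula, in particular the signs of $\Delta$ and of $\mathrm{div}\,\mathrm{div}$. I would check these against the conformal change formula for scalar curvature: for $\tilde g = e^{2u} g$,
\[
\tilde R = e^{-2u}\big(R - 2(n-1)\Delta u - (n-2)(n-1)|\nabla u|^2\big).
\]
Linearizing with $e^{2u} \approx 1 + 2t\psi$, i.e. $u \approx t\psi$, gives exactly $-2\psi R - 2(n-1)\Delta\psi$, which confirms Step 2.

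If a purely tensorial proof is preferred, there is an alternative route. Write $\nabla_i X_j + \nabla_j X_i = 2\psi g_{ij}$, differentiate, and use the Ricci identity to express $\nabla\nabla X$ through curvature. Then contract the resulting identity for $\sL_X \Rc$ to get $\sL_X R = -2\psi R - 2(n-1)\Delta\psi$. This is the same computation in coordinates.
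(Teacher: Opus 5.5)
Your proof is correct, and your convention $\Delta=\mathrm{tr}_g\nabla^2$ matches the paper's (it writes $\Delta(R-\rho)=\mathrm{div}(\nabla(R-\rho))$).

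The paper gives no proof of this lemma; it only cites Yano. Yano's classical derivation is essentially the tensorial route you mention at the end:
\begin{itemize}
\item from $\sL_X g=2\psi g$ one gets $\sL_X\Gamma^h_{ij}=\delta^h_i\nabla_j\psi+\delta^h_j\nabla_i\psi-g_{ij}\nabla^h\psi$;
\item hence $\sL_X\Rc=-(n-2)\nabla^2\psi-(\Delta\psi)\,g$;
\item tracing with $\sL_X g^{ij}=-2\psi g^{ij}$ gives $\sL_X R=-2\psi R-2(n-1)\Delta\psi$, which is compared with $\sL_X R=X(R)$.
\end{itemize}

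Your main argument uses the same idea of computing $\sL_X R$ in two ways. The difference is that you outsource the curvature computation to the standard first-variation formula for $R$ together with naturality, $R_{\vp_t^\ast g}=R\circ\vp_t$. This buys brevity and a clear conceptual reason for the identity: it is just $X(R)=DR_g(\sL_X g)$ evaluated on a pure-trace variation. The cost is quoting the variation formula, whose proof is precisely Yano's Christoffel-symbol computation. Yano's route is self-contained and also yields the finer identity for $\sL_X\Rc$.

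One further observation turns your sanity check into a complete proof on its own. Since $\frac{d}{dt}\vp_t^\ast g=\vp_t^\ast(\sL_X g)=2(\psi\circ\vp_t)\,\vp_t^\ast g$, the local flow acts by conformal maps, with $\vp_t^\ast g=e^{2u_t}g$ and $u_t=\int_0^t\psi\circ\vp_s\,ds$. Differentiating the conformal change formula for $R_{e^{2u_t}g}$ at $t=0$, where $u_0=0$ and $\dot u_0=\psi$, then gives the lemma without the general variation formula at all.
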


\vspace*{1mm}
As in \cite{Hwang-Yun-2025}, for a smooth vector field $X \in \fX(M)$ one can define a {\em skew-symmetric} $(1,1)$-tensor $\Phi_{X}: \fX(M) \to \fX(M)$ by $$\langle \Phi_{X} (Y), Z \rangle = \frac{1}{2} d X^{\flat} (Y, Z)$$ for $Y, Z \in \fX(M)$, where the standard notation $X^{\flat}$ denotes the dual $1$-form corresponding to the smooth vector field $X$ with respect to the Riemannian metric $g$ in $M$. Then, using \eqref{eq:Lie-derivative-formula-in-terms-of-covariant-derivative} we have the following basic identity
\begin{align*}
\sL_{X} g (Y, Z) + 2 \langle \Phi_{X} (Y), Z \rangle = \sL_{X} g (Y, Z) + d X^{\flat} (Y, Z) = 2 \langle \nabla_{Y} X, Z \rangle 
\end{align*}
for any vector fields $X, Y, Z \in \fX(M)$. In particular, for an almost Yamabe soliton $(M, g, X, \rho)$ one has
\begin{equation}
\nabla_{Y} X = (R - \rho) Y + \Phi_{X}(Y)
\end{equation}
for all smooth vector fields $Y$ on $M$. As a consequence, we see that the smooth vector field $X$ is closed if and only if the $(1,1)$-tensor $\Phi_{X}$ is identically zero. The reader can refer to Theorem $2.1$ of \cite{Hwang-Yun-2025} for more properties about almost Yamabe solitons.

\vspace*{1mm}
We end this section by recalling the following result due to Caminha \cite{Caminha 2011}. Note that the proof of the third item of Theorem \ref{thm:Barbosa-Ribeiro Jr} is based on this result (see Section \ref{sec:some corrections in Barbosa-Ribeiro Jr article}). 
	
\begin{prop}[{\cite[Proposition 2.1]{Caminha 2011}}]
\label{prop:Caminha-2011-triviality-of-div}
Let $X$ be a smooth vector field on the complete, non-compact, oriented Riemannian manifold $M^n$, such that $\mathrm{div}(X)$ does not change sign on $M$. If $|X| \in L^1(M)$, then $\mathrm{div}(X) = 0$ on $M$.
\end{prop}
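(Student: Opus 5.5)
The plan is to integrate $\mathrm{div}(X)$ against a family of compactly supported cutoff functions exhausting $M$. Completeness makes geodesic balls relatively compact, and $|X| \in L^1(M)$ controls the boundary term that the cutoffs produce. Replacing $X$ by $-X$ if necessary, we may assume $\mathrm{div}(X) \geq 0$ on $M$.

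Fix a point $o \in M$ and, for $r > 0$, let $B_r$ denote the geodesic ball of radius $r$ centred at $o$. By completeness (Hopf--Rinow) each closed ball $\overline{B_{2r}}$ is compact. Choose Lipschitz cutoffs $\varphi_r : M \to [0,1]$ with $\varphi_r \equiv 1$ on $B_r$, $\varphi_r \equiv 0$ outside $B_{2r}$, and $|\nabla \varphi_r| \leq 1/r$ almost everywhere. A concrete choice is $\varphi_r(x) = \min\{1, \max\{0, 2 - d(o,x)/r\}\}$, which is Lipschitz because the distance function is $1$-Lipschitz. Since $\varphi_r X$ is a compactly supported Lipschitz vector field, the divergence theorem gives $\int_M \mathrm{div}(\varphi_r X)\,\dvol_g = 0$. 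Orientability is convenient here, but only the Riemannian measure is needed, so an approximation by smooth cutoffs, or the density version, suffices if one prefers to stay smooth. Expanding $\mathrm{div}(\varphi_r X) = \varphi_r\,\mathrm{div}(X) + \langle \nabla\varphi_r, X\rangle$ yields
\begin{equation*}
0 \leq \int_{B_r} \mathrm{div}(X)\,\dvol_g \leq \int_M \varphi_r\,\mathrm{div}(X)\,\dvol_g = -\int_M \langle \nabla \varphi_r, X\rangle\,\dvol_g \leq \frac{1}{r}\int_{B_{2r}\setminus B_r} |X|\,\dvol_g \leq \frac{1}{r}\,\lVert X\rVert_{L^1(M)}.
\end{equation*}
Here the nonnegativity of $\mathrm{div}(X)$ is used in the first two inequalities.

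Letting $r \to \infty$, the right-hand side tends to $0$ because $|X| \in L^1(M)$. Alternatively, the tail integral $\int_{M\setminus B_r}|X|$ itself tends to zero, which is even stronger. By monotone convergence, $\int_M \mathrm{div}(X)\,\dvol_g = 0$. A continuous nonnegative function with zero integral vanishes identically, so $\mathrm{div}(X) = 0$ on $M$.

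The only delicate point is justifying the divergence theorem for the merely Lipschitz field $\varphi_r X$. I would handle it either by mollifying $\varphi_r$ to a smooth cutoff with gradient bound $2/r$, which standard results on complete manifolds provide, or by applying Stokes' theorem to the smooth compactly supported approximations and passing to the limit. Everything else is routine.
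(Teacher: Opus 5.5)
Your proof is correct. The paper does not prove this proposition; it only quotes it from Caminha, so the natural comparison is with Caminha's original argument.

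Caminha sets $\omega = \iota_X \dvol_g$, which is where orientability is used. He notes that $|\omega| = |X| \in L^1(M)$ and $d\omega = \mathrm{div}(X)\,\dvol_g$. He then invokes Yau's Stokes theorem for integrable $(n-1)$-forms on complete manifolds: there is an exhaustion $B_1 \subset B_2 \subset \cdots$ of $M$ by domains with $\int_{B_i} d\omega \to 0$. After that, the sign condition forces $\mathrm{div}(X) \equiv 0$ exactly as in your last step.

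Your cutoff argument replaces that black box with a direct proof of the special case needed. It is in the spirit of Gaffney's special Stokes theorem, with the sign condition playing the role of $\mathrm{div}(X) \in L^1(M)$. This gives you a self-contained proof and the explicit estimate $0 \le \int_{B_r} \mathrm{div}(X)\,\dvol_g \le \frac{1}{r}\int_{B_{2r}\setminus B_r} |X|\,\dvol_g$. That estimate shows the conclusion survives under the weaker hypothesis $\liminf_{r\to\infty} \frac{1}{r}\int_{B_{2r}\setminus B_r}|X|\,\dvol_g = 0$. It also shows that neither orientability nor non-compactness is really needed: on a compact manifold the cutoff is eventually $\equiv 1$ and you recover the ordinary divergence theorem. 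Caminha's route is shorter once Yau's lemma is granted.

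Two small points deserve to be stated explicitly. First, $\bigcup_r B_r = M$ uses connectedness of $M$; this is a standing convention, and otherwise you can argue componentwise. Second, integration by parts against your Lipschitz cutoff can be justified directly, because a compactly supported Lipschitz function lies in $W^{1,1}$. So you do not need any smoothing result with a uniform gradient bound.
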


\vspace*{1mm}
\section{Proof of the main results: compact case}
\label{sec:proof of main result}
	
We now prove our first main Theorem \ref{thm:main-compact case}.
	
\begin{proof}[\textbf{Proof of Theorem \ref{thm:main-compact case}}]
Since $(M, g, X, \rho)$ is an almost Yamabe soliton (i.e. equation \eqref{eq:almost-Yam-soliton} holds) of dimension $n \geq 2$, we see that $X$ is a conformal vector field with conformal factor $(R - \rho)$. Since the manifold $M$ is compact, applying Theorem II.9 of Bourguignon and Ezin \cite[pp. 727]{Bourguignon-Ezin-1987} the vector field $X$ satisfies the following condition
\begin{equation}\label{eq:Bourguignon-Ezin-identity-about-conformal-vector-field}
\int_M \langle \nabla R, X \rangle \dvol_{g} = \int_M X(R) \dvol_{g} = 0.
\end{equation}
Moreover, taking trace in the equation \eqref{eq:almost-Yam-soliton} we also have 
\begin{align}\label{eq:divergence-of-X-for-almost-Yamabe-soliton}
\mathrm{div} (X) = n (R - \rho).
\end{align}
Combing \eqref{eq:divergence-of-X-for-almost-Yamabe-soliton} and \eqref{eq:Bourguignon-Ezin-identity-about-conformal-vector-field} it yields that
\begin{equation}\label{eq:integration-of-R(R-rho)-is-zero}
\begin{split}
\int_M R (R - \rho) \dvol_{g} &= \frac{1}{n} \int_M R  \mathrm{~ div}(X) \dvol_{g} \\
& = - \frac{1}{n} \int_M \langle \nabla R, X \rangle \dvol_{g} \\
& = 0.
\end{split}
\end{equation}
In the second line of above computation we used the formula
\begin{align}\label{eq:div-formula-for-R-times-vector-field}
\mathrm{div}(R X) = \langle \nabla R , X \rangle + R \mathrm{div}(X). 
\end{align}
Therefore, from \eqref{eq:integration-of-R(R-rho)-is-zero} we get that
\begin{equation}\label{eq:integration-of-(R-rho)-square}
\begin{split}
\int_M (R - \rho)^2 \dvol_{g} &= \int_M R(R - \rho) \dvol_{g} - \int_M \rho(R - \rho) \dvol_{g} \\ 
& = - \int_M \rho(R - \rho) \dvol_{g}.
\end{split}
\end{equation}
Now, by the hypothesis \eqref{eq:inequality-assumption-compact-case} we obtain
\begin{align*}
0 &\leq \int_M (R^2 - \rho^2) \dvol_g \\
& = \int_M (R + \rho) (R - \rho) \dvol_{g} \\
& = \int_M R(R - \rho) \dvol_{g} + \int_M \rho(R - \rho) \dvol_{g} \\
&= \int_M \rho(R - \rho) \dvol_{g}, \hspace*{4cm} (\text{by} ~ \eqref{eq:integration-of-R(R-rho)-is-zero}) \\
&= - \int_M (R - \rho)^2 \dvol_{g}  \hspace*{4cm} (\text{by} ~ \eqref{eq:integration-of-(R-rho)-square} 
\end{align*}
and hence we must have $R = \rho$. Therefore, from equation \eqref{eq:almost-Yam-soliton} we conclude that $X$ is a Killing vector field. 
		
Since the condition \eqref{eq:integration-of-R(R-rho)-is-zero} also holds true for dimension $n = 2 $, the conclusion of the second part follows from Theorem 3.5 in \cite{Hwang-Yun-2025}.
\end{proof}

Next, we prove our Corollary \ref{cor:corollary-1-compact-case}.
	
\begin{proof}[\textbf{Proof of Corollary \ref{cor:corollary-1-compact-case}}]
If one of the inequalities $0\leq \rho \leq R$ and $R \leq \rho \leq 0$ holds, then we have 
\begin{align*}
\int_M R^2 \dvol_{g} \geq \int_M \rho^2 \dvol_{g}.
\end{align*}
Therefore, applying Theorem \ref{thm:main-compact case} we conclude that $X$ is a Killing vector field. 
\end{proof}

\vspace*{1mm}
\section{Some corrections in the article of Barbosa-Ribeiro Jr}
\label{sec:some corrections in Barbosa-Ribeiro Jr article}
	
In Remark \ref{rem:errors-in-Barbosa-Ribeiro Jr}, we pointed out the errors in the first and second items of Theorem 1.3 in \cite{Barbosa-Ribeiro Jr-2013}. We now provide a brief description of the proof of Theorem \ref{thm:Barbosa-Ribeiro Jr} correcting the errors appeared in the arguments of the proof of Theorem 1.3 in \cite[pp. 84]{Barbosa-Ribeiro Jr-2013}. We also give an alternative simple proof for the second item of Theorem 1.3 in \cite{Barbosa-Ribeiro Jr-2013}.
	
\begin{proof}[\textbf{Proof of Theorem \ref{thm:Barbosa-Ribeiro Jr}}]
Suppose the manifold $M$ is compact. Then, since $(M, g, X, \rho)$ is an almost Yamabe soliton, integrating the formula \eqref{eq:Bochner-type-formula-for-almost-Yamabe-solitons} we obtain
\begin{align*}
\int_M |\nabla X|^2 \dvol_{g} &= \int_{M} \Rc(X, X) \dvol_{g} + (n-2) \int_M \langle \nabla R, X \rangle \dvol_{g} \\
& \hspace*{3.7cm} - (n-2) \int_M \langle \nabla \rho, X \rangle \dvol_{g} \\
&= \int_M \Big(\Rc(X, X) - (n-2) \langle \nabla \rho, X \rangle \Big)\dvol_{g}. \hspace*{2cm}~ (\text{by}~ \eqref{eq:Bourguignon-Ezin-identity-about-conformal-vector-field})
\end{align*}
Note that a sign error (namely, $(n-2)$ instead of $-(n-2)$) occurs in equation $(3.2)$ of their paper \cite{Barbosa-Ribeiro Jr-2013}. Therefore, under the assumption \eqref{integral-condition-involving-Ric-BarRib2013} we must have $\nabla X = 0$ and hence $X$ is a Killing vector field which is also parallel.
		
For the proof of the second item, using the conditions \eqref{eq:integration-of-(R-rho)-square} and \eqref{eq:divergence-of-X-for-almost-Yamabe-soliton} we compute
\begin{align*}
\int_M (R -\rho)^2 \dvol_{g} &= - \int_M \rho (R -\rho) \dvol_{g} \\
&= - \frac{1}{n} \int_M \rho \mathrm{~ div}(X) \dvol_{g} \\
& = \frac{1}{n} \int_M \langle \nabla \rho, X \rangle \dvol_{g}.
\end{align*}
In the last line, we used the formula 
\begin{equation}\label{eq:divergence-formula-for-multiply-by-functions}
\mathrm{div}(\rho X) = \langle \nabla \rho, X \rangle + \rho \mathrm{div}(X).
\end{equation}
We remark that again a sign error (namely, $-\frac{1}{n}$ instead of $\frac{1}{n}$ in the last line) occurs in their computation after equation $(3.6)$ in \cite[pp. 85]{Barbosa-Ribeiro Jr-2013}. Therefore, under the assumption \eqref{integral-condition-involving-grad of rho-BarRib2013} we must have $R = \rho$ and hence from the equation \eqref{eq:almost-Yam-soliton} we infer that $X$ is a Killing vector field. Note that in this case, $X$ may not be parallel.
		
The proof of the third item was given in \cite[pp. 85]{Barbosa-Ribeiro Jr-2013} by applying Proposition 2.1 of \cite{Caminha 2011} (see Proposition \ref{prop:Caminha-2011-triviality-of-div}). This completes the proof of Theorem \ref{thm:Barbosa-Ribeiro Jr}.
\end{proof}

\vspace*{1mm}
\begin{proof}[\textbf{Alternative proof of Theorem \ref{thm:Barbosa-Ribeiro Jr}(2.)}]
We now provide another simple and direct proof for the second item of Theorem \ref{thm:Barbosa-Ribeiro Jr} when dimension $n \geq 3$. Recall that for any almost Yamabe soliton $(M, g, X, \rho)$, the vector field $X$ satisfies \eqref{eq:definition-of-conformal-vector-fields} with conformal factor $\psi := (R - \rho)$. So, applying Lemma \ref{lem:identity-for-conformal-vector-field-by-Yano} and using the formula \eqref{eq:div-formula-for-R-times-vector-field} we obtain
\begin{align*}
R(R - \rho) &= - (n-1) \Delta (R - \rho) - \frac{1}{2} \langle \nabla R, X \rangle \\
& = - (n-1) \mathrm{div}(\nabla (R- \rho)) - \frac{1}{2} \mathrm{div}(R X) + \frac{R}{2} \mathrm{div}(X) \\
&= - \mathrm{div}\left((n-1)\nabla (R - \rho) + \frac{R}{2} X\right) + \frac{R}{2} \mathrm{div}(X).
\end{align*}
Therefore, using the relation \eqref{eq:divergence-of-X-for-almost-Yamabe-soliton} (i.e. $\mathrm{div}(X) = n (R - \rho)$) and simplifying we get
\begin{equation}\label{equation-for-R-times-R-minus-rho}
R (R -\rho) = \mathrm{div}\left(\frac{R}{n-2} X + \frac{2(n-1)}{n-2} \nabla (R - \rho)\right),
\end{equation}
where we assume that $n \geq 3$. Note that for $n=2$ we have $\Delta (R - \rho) = - \frac{1}{2} \mathrm{div}(R X)$. On the other hand, once again using \eqref{eq:divergence-of-X-for-almost-Yamabe-soliton} and the formula \eqref{eq:divergence-formula-for-multiply-by-functions} we obtain
\begin{equation}\label{equation-for-rho-times-R-minus-rho}
\rho (R - \rho) = \frac{1}{n} \mathrm{div}(\rho X) - \frac{1}{n} \langle \nabla \rho, X \rangle.
\end{equation}
So, subtracting the equation \eqref{equation-for-rho-times-R-minus-rho} from the equation \eqref{equation-for-R-times-R-minus-rho} we have the following formula
\begin{equation}\label{eq:formula-for-R-minus-rho-square-in-terms-of-div}
(R - \rho)^2 = \mathrm{div}\left((\frac{R}{n-2} - \frac{\rho}{n}) X + \frac{2(n-1)}{n-2} \nabla (R - \rho)\right) + \frac{1}{n} \langle \nabla \rho, X \rangle
\end{equation}
for $n \geq 3$. Since the manifold $M$ is closed (i.e. compact and without boundary), integrating the above formula and assuming the condition \eqref{integral-condition-involving-grad of rho-BarRib2013} we must have $R = \rho$, and hence $X$ is a Killing vector field. 
\end{proof}
	
\vspace*{1mm}
We finish this section by correcting the proof of Corollary $1.4$ in \cite[pp. 85]{Barbosa-Ribeiro Jr-2013}. As we see in the above proof, instead of the first and second displayed integral identity in their proof one actually has
\begin{align*}
\int_M |\nabla X |^2 \dvol_{g} = \int_M \Big(\Rc(X, X) - (n-2) \langle \nabla \rho, X \rangle \Big)\dvol_{g}, 
\end{align*}
and 
\begin{align*}
\int_M (R -\rho)^2 \dvol_{g} = \frac{1}{n} \int_M \langle \nabla \rho, X \rangle \dvol_{g}.
\end{align*}
It implies that
\begin{align*}
\int_M \Big(\Rc(X, X) - n(n-2) (R - \rho)^2 - |\nabla X|^2 \Big)\dvol_{g} = 0.
\end{align*}
Note that there is a factor $1/2$ in the term involving the Ricci curvature in their proof which should be removed. Therefore, if we assume that the Ricci curvature is non-positive, we obtain $|\nabla X|^2 = 0$ and hence $X$ is a parallel Killing vector field. This completes the proof of Corollary $1.4$ in \cite{Barbosa-Ribeiro Jr-2013}. See also the proof of Theorem $3.3$ in \cite{Hwang-Yun-2025}.

\vspace*{1mm}
\section{Proof of the main results: non-compact case}
\label{sec:proof-of-main-results-in-non-compact-case}
	
We now prove our second main Theorem \ref{thm:main-1-complete-non-cpt-case}.
	
\begin{proof}[\textbf{Proof of Theorem \ref{thm:main-1-complete-non-cpt-case}}]
For any almost Yamabe soliton $(M, g, X, \rho)$ of dimension $n \geq 3$, we already showed that the identity \eqref{eq:formula-for-R-minus-rho-square-in-terms-of-div} holds. For simplicity, we define a smooth vector field $V \in \fX(M)$ by 
\begin{align*}
V := \left(\frac{R}{n-2} - \frac{\rho}{n}\right) X + \frac{2(n-1)}{n-2} \nabla (R - \rho)
\end{align*}
so that we have
\begin{align}\label{eq:div-for-vector-field-V}
\mathrm{div}(V) = (R - \rho)^2 - \frac{1}{n} \langle \nabla \rho, X \rangle.
\end{align}
Since $\langle \nabla \rho, X \rangle \leq 0$, the divergent of the smooth vector field $V$ is non-negative. On the other hand, if we denote $N_{Y}:= |Y|_{g}$ for any smooth vector field $Y \in \fX(M)$, then using the H\"{o}lder inequality and the Minkowski inequality we obtain that
\begin{align*}
\lVert N_{V} \rVert_{L^1(M)} &\leq \lVert \frac{R}{n-2} - \frac{\rho}{n} \rVert_{L^2(M)} \lVert N_{X} \rVert_{L^2(M)} + \frac{2(n-1)}{n-2} \lVert \nabla (R - \rho) \rVert_{L^1(M)} \\
& \leq \left(\frac{1}{n-2} \lVert R- \rho \rVert_{L^2(M)} + \frac{2}{n(n-2)} \lVert \rho \rVert_{L^2(M)} \right) \lVert N_{X} \rVert_{L^2(M)} \\ & \hspace*{4cm} + \frac{2(n-1)}{n-2} \lVert \nabla (R - \rho) \rVert_{L^1(M)}.
\end{align*}
Here, $\lVert \nabla (R - \rho) \rVert_{L^1(M)}$ denotes the norm $\lVert N_{Y} \rVert_{L^1(M)}$ for $Y := \nabla(R - \rho)$. Thus, the conditions \eqref{eq:inner-prod-of-grad-rho-and-X-non-positive} and \eqref{eq:grad-R-minus-rho-in-L1-space} imply that $|V| \in L^1(M)$. Therefore, applying Proposition \ref{prop:Caminha-2011-triviality-of-div} we must have $\mathrm{div}(V) = 0$. In particular, from equation \eqref{eq:div-for-vector-field-V} we have
\begin{align*}
(R - \rho)^2 = \frac{1}{n} \langle \nabla \rho, X \rangle.
\end{align*}
But since $\langle \nabla \rho, X \rangle \leq 0$, we must have $R = \rho$, and hence $X$ is a Killing vector field.
\end{proof}

\vspace*{1mm}
Next, we complete the proof of the Corollary \ref{cor:complete-non-compact-case-under-finite-vol-and-Sobolev-condition} and Corollary \ref{cor:complete-non-compact-case-under-rho-and-R-minus-rho-non-negative}.

\begin{proof}[\textbf{Proof or Corollary \ref{cor:complete-non-compact-case-under-finite-vol-and-Sobolev-condition}}]
Since $(M, g, X, \rho)$ is a complete non-compact almost Yamabe soliton of dimension $n \geq 3$ with finite volume, using \eqref{eq:divergence-of-X-for-almost-Yamabe-soliton} and the condition \eqref{eq:div-of-X-is-in-Sobolev-space} we have that $(R - \rho) \in L^2(M)$ and $|\nabla(R - \rho)| \in L^1(M)$. Since $\rho \in L^2(M)$, we also have $R \in L^2(M)$ and so the condition \eqref{eq:grad-R-minus-rho-in-L1-space} is satisfied. Therefore, applying Theorem \ref{thm:main-1-complete-non-cpt-case} we conclude that $X$ is a Killing vector field.
\end{proof}

\vspace*{1mm}
\begin{proof}[\textbf{Proof of Corollary \ref{cor:complete-non-compact-case-under-rho-and-R-minus-rho-non-negative}}]
Note that the condition $R + |X| \in L^2(M)$ and the inequality $0 \leq \rho \leq R$ imply that $|X| \in L^2(M)$, $R \in L^2(M)$ and $\rho \in L^2(M)$. Since we also have $\langle \nabla \rho, X \rangle \leq 0$ and $|\nabla(R - \rho)| \in L^1(M)$, applying Theorem \ref{thm:main-1-complete-non-cpt-case} we conclude that $X$ is a Killing vector field.
\end{proof}

\vspace*{1.5mm}
\section*{Acknowledgements}
	
This work was supported in part by an Institute Post Doctoral Fellowship from the Indian Institute of Technology Bombay.

\vspace*{1.5mm}

\end{document}